\newtheorem*{thm}{Theorem}
\newtheorem*{lemma}{Lemma}
\theoremstyle{definition}
\theoremstyle{remark}
\begin{document}

\title[]{A Nonlocal Transport Equation modeling Complex \\Roots of Polynomials under Differentiation}
\keywords{Roots, Differentiation, Transport equation, Hardy Inequality.}
\subjclass[2010]{35Q70, 35Q82, 44A15, 82C70 (primary), 26C10, 31A99, 37F10 (secondary).}

\author[]{Sean O'Rourke}
\address{Department of Mathematics, University of Colorado, Campus Box 395, Boulder, CO 80309-0395, USA}
\email{sean.d.orourke@colorado.edu}

\author[]{Stefan Steinerberger}
\address{Department of Mathematics, University of Washington, Seattle, WA 98195, USA}
\email{steinerb@uw.edu}

\thanks{S.O. has been supported in part by NSF grants ECCS-1610003 and DMS-1810500. S.S. is partially supported by the NSF (DMS-1763179) and the Alfred P. Sloan Foundation. Part of the work was carried out while S.O. was visiting Yale University, he is grateful for the hospitality.}

\begin{abstract} Let $p_n:\mathbb{C} \rightarrow \mathbb{C}$ be a random complex polynomial whose roots are sampled i.i.d. from a radial distribution $2\pi r u(r) dr$ in the complex plane. A natural question
is how the distribution of roots evolves under repeated (say $n/2-$times) differentiation of the polynomial. We conjecture a mean-field expansion for the evolution of $\psi(s) = u(s) s$ 
$$ \frac{\partial \psi}{\partial t} = \frac{\partial}{\partial x} \left( \left( \frac{1}{x} \int_{0}^{x} \psi(s) ds \right)^{-1} \psi(x) \right).$$
The evolution of $\psi(s) \equiv 1$ corresponds to the evolution of random Taylor polynomials 
$$ p_n(z) = \sum_{k=0}^{n}{ \gamma_k \frac{z^k}{k!}} \quad \mbox{where} \quad \gamma_k \sim \mathcal{N}_{\mathbb{C}}(0,1).$$
We discuss some numerical examples suggesting that this particular solution may be stable. We prove that the solution is linearly stable. The linear stability analysis reduces to the classical Hardy integral inequality. Many open problems are discussed.
\end{abstract}

\maketitle

\vspace{-5pt}

\section{Introduction}

\subsection{Introduction.} We ask a simple question, a special case of which is as follows: suppose $p_n:\mathbb{C} \rightarrow \mathbb{C}$ is a random polynomial given by
$$ p_n(z) = \prod_{k=1}^{n}{(z-z_k)} \quad \mbox{where the roots} \quad z_k \sim \mathcal{N}_{\mathbb{C}}(0,1)$$
are independently and identically distributed (i.i.d.) following a standard (complex) Gaussian. What can be said about the roots of the $n/2-$derivative of that polynomial?
Are these roots still distributed like a Gaussian or do they follow another distribution? What if we replace the Gaussian by another
distribution? What if the roots are not i.i.d. but have a density close to that of a nice distribution? To the best of our knowledge, there is only one example where rigorous results are available. We define random Taylor polynomials 
via

$$ p_n(z) = \sum_{k=0}^{n}{ \gamma_k \frac{z^k}{k!}} \quad \mbox{where} \quad \gamma_k \sim \mathcal{N}_{\mathbb{C}}(0,1).$$

Clearly, the $k-$th derivative of a random Taylor polynomial is a random Taylor polynomial of degree $n-k$. In particular, if their roots
happen to have a nice limiting distribution, then this would give us an insight into the possible evolution. A result of Kabluchko \& Zaporozhets
\cite{kab12} shows that the roots (which are not independent) are asymptotically contained in a disk of radius $\sim n$ and the renormalized roots $z_1, \dots, z_n$ converge in distribution to
$$ \frac{1}{n} \sum_{k=1}^{n}{ \delta_{z_k n^{-1}}} \rightarrow \frac{\chi_{|z| \leq 1}}{2\pi |z|} \qquad \mbox{as}~n \rightarrow \infty.$$
This shows that the roots $z_1, \dots, z_n$ have the asymptotic density $\chi_{|z| \leq n}/(2 \pi |z|)$
and that we would expect the roots of the $(t\cdot n)-$derivative, for some $0< t< 1$, to be distributed according to the density
$\chi_{|z| \leq (1-t)n}/(2\pi|z|)$. 
The purpose of this short paper is to pose the question and to present an evolution equation from a mean field approximation; our hope is that this equation might
answer the question posed above for general distributions (and might also be interesting in its own right). 

\begin{center}
\begin{figure}[h!]
\begin{tikzpicture}
\node at (0,0) {\includegraphics[width = 0.5\textwidth]{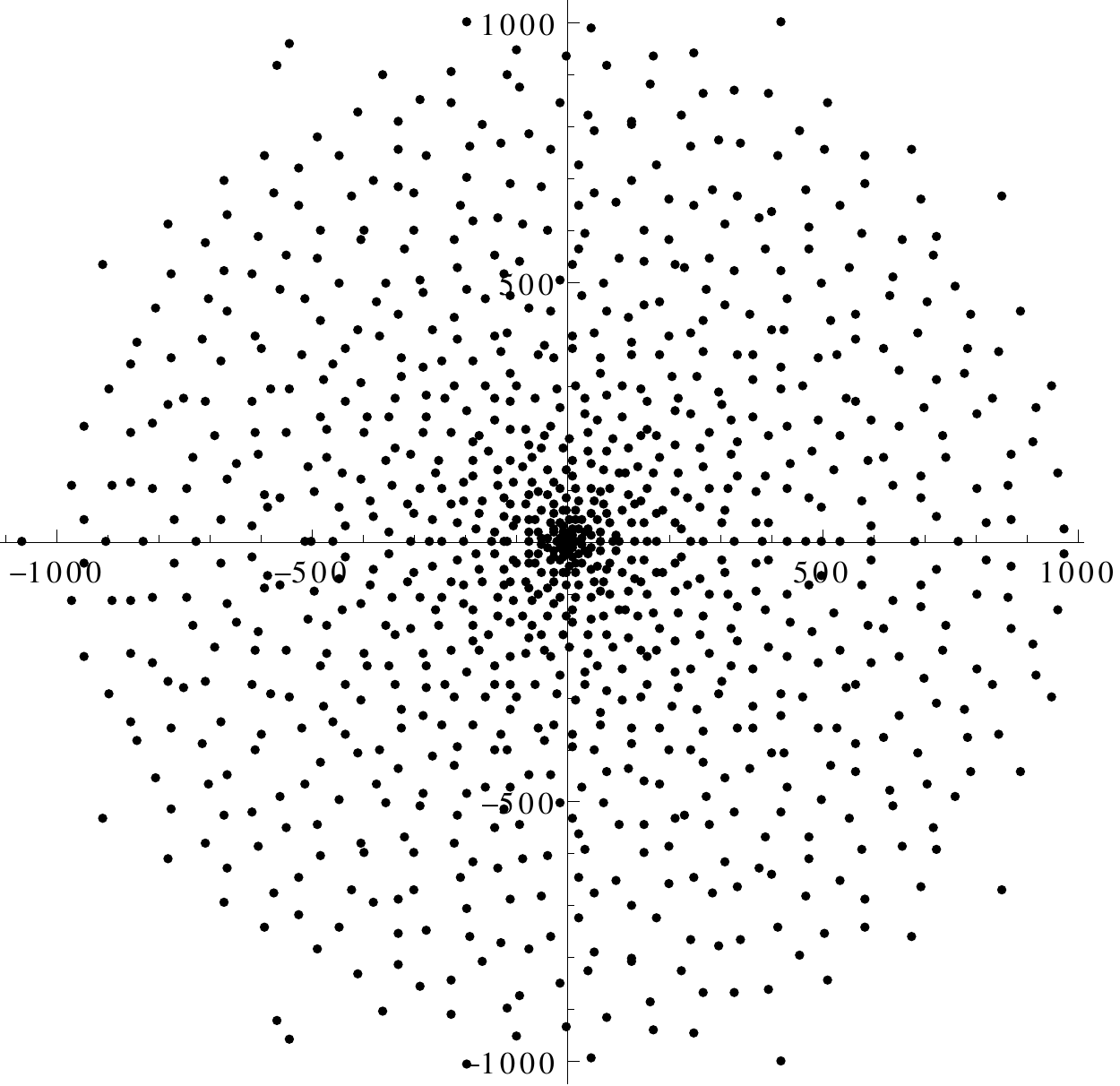}};
\node at (5.5,0) {\includegraphics[width = 0.3\textwidth]{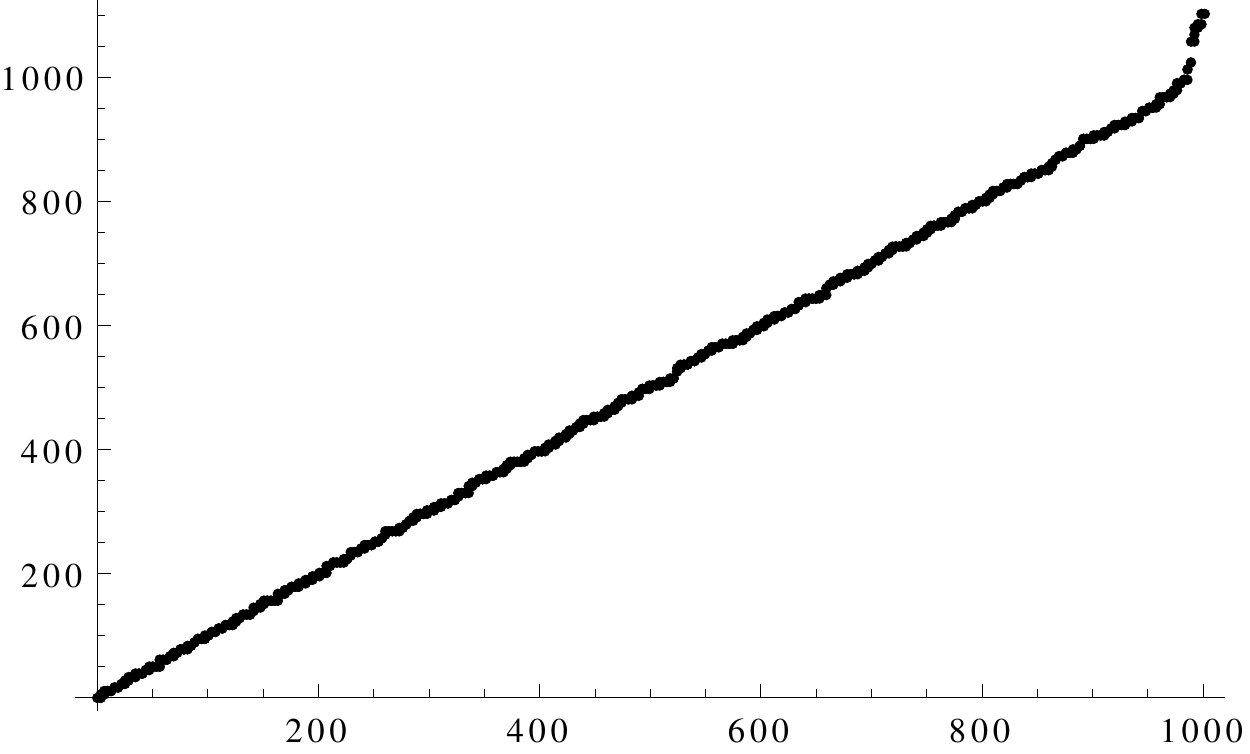}};
\end{tikzpicture}
\caption{Roots of a Random Taylor polynomial of degree 1000 (left). Distances of the roots to the origin (right) are approximately linear as predicted by the limiting measure $1/(2\pi |z|)$.}
\end{figure}
\end{center}

\subsection{Related results.} The detailed study of the distribution of roots of $p_n'$ depending on $p_n$ is an active field \cite{bruj, bruj2, branko, dimitrov, gauss,  han,  kab, lucas, malamud, sean, or, or2, pem, rav, steini2, totik, ull, van}. There are fewer results for the case of repeated differentiation.  If $p_n$ is a polynomial of degree $n$ having $n$ distinct roots on the real line, then the $k-$th derivative has all of its $n-k$ roots also on the real line and one could wonder about their evolution. A result commonly attributed to Riesz \cite{riesz} implies that the minimum gap between consecutive roots of $p_n'$ is bigger than that
of $p_n$: zeroes even out and become more regular. We refer the reader to results of Farmer \& Rhoades \cite{farmer}, Farmer \& Yerrington \cite{farmer2}, Feng \& Yao \cite{feng} and Pemantle \& Subramnian \cite{pem2}. 
Our result is inspired by a one-dimensional investigation due to the second author \cite{steini3}: if the roots of $p_n$ are all real and follow a nice distribution $u(0,x)$,
what can be said about the distribution $u(t,x)$ of the $(t \cdot n)-$th derivative of $p_n$ where $0 < t < 1$? In \cite{steini3} it is proposed that the limiting dynamics exists and is given by
the partial differential equation
$$
 u_t + \frac{1}{\pi} \left(\arctan{ \left( \frac{Hu}{ u}\right)} \right)_x = 0
$$
where the equation is valid on the support $\mbox{supp}~u = \left\{x: u(x) > 0 \right\}$ and $H$ is the Hilbert transform. The equation has been shown to give the correct predictions
for Hermite polynomials and Laguerre polynomials. One interesting aspect is that the equation has similarities to one-dimensional transport
equations that are studied in fluid dynamics, see e.g. \cite{car, castro, cord1, const, cord2, do, dong2, lazar, li, silvestre}. Granero-Belinchon \cite{granero} studied an analogue of the equation on the one-dimensional torus.

\section{Results}
\subsection{The Equation.} Let us assume that $p_n:\mathbb{C} \rightarrow \mathbb{C}$ is a polynomial of degree $n$ whose roots are distributed according to a radial density function
$u(|z|)dz$ on $\mathbb{C}$. Then the density of roots at distance $r$ from the origin is given by $\psi(r) = u(r)r dr$. We will, throughout the paper, understand $u$ as the probability density of the measure
of roots at time $t=0$. Assuming radial structure, understanding $u$ and its evolution in time, is equivalent to understanding $\psi(r) = 2\pi r u(r)$ and its evolution in time (one can be understood as the representation
of the other in polar coordinates). We use $\psi(r)$ to denote the initial distribution and write $\psi(t,r)$ for the distribution of roots after we differentiated $t\cdot n$-times. 
We are interested in the process of differentiation and its effect on the
distribution of the roots. We present a mean field equation that we conjecture models the density of roots at distance $x$ at time $t$ via the nonlocal transport equation
\begin{equation}
\frac{\partial \psi}{\partial t}  = \frac{\partial}{\partial x}\left(  \left(\frac{1}{x} \int_0^x \psi(y) dy\right)^{-1} \psi(x)\right).
\end{equation}
We believe that this equation may be interesting in its own right and give some supporting evidence of this belief. There is a scaling symmetry for $\lambda > 0$
$$ u(t,x) \rightarrow \lambda^{} \cdot u(t,\lambda x).$$
This scaling symmetry is naturally related to the chain rule: if we rescale the roots of the polynomial, then all the derivatives obey the same rescaling as well since
$$ \frac{d}{dz^k} p_n(\lambda z) = \lambda^k  \left( \frac{d }{dz^k} p_n\right)(\lambda z).$$
Needless to say, the factor $\lambda^k$ does not impact the presence or absence of roots implying a scale-invariance property of the system.
There is one more property that can be predicted from the behavior of polynomials: the $(t \cdot n)-$th derivative of a polynomial $p_n$ has $(1-t)n$ roots. In particular,
what one would then expect from any such equation is that there is a constant loss of mass that is independent of the function and independent of time. This loss would
also imply that the function vanishes at time $t=1$. This is indeed the case: if we assume the solutions are continuous, there is a constant loss of mass on $(0, \infty)$: 
\begin{align*}
 \frac{\partial}{\partial t} \int_{0}^{\infty}{ \psi(t,x) dx} &=  \int_{0}^{\infty}{ \frac{\partial}{\partial t} \psi(t,x) dx}\\
 &=   \int_{0}^{\infty}{  \frac{\partial}{\partial x}\left(  \left(\frac{1}{x} \int_0^x \psi(y) dy\right)^{-1} \psi(x)\right) dx}\\
 &= - \lim_{\varepsilon \rightarrow 0}  \left(\frac{1}{\varepsilon} \int_0^{\varepsilon} \psi(y) dy\right)^{-1} \psi(\varepsilon)\\
 &= -1.
\end{align*}

\subsection{Linear Stability.} 
 We note that there is an explicit solution (when properly interpreted) given by
$$ \psi(t,x) = \chi_{0 \leq x \leq 1-t}$$
which corresponds to the random Taylor polynomials and respects their evolution (as implied by the result of Kabluchko \& Zaporozhets \cite{kab12}).

\begin{figure}[h!]
\begin{minipage}[l]{.45\textwidth}
\begin{tikzpicture}
\node at (0,0) {\includegraphics[width = \textwidth]{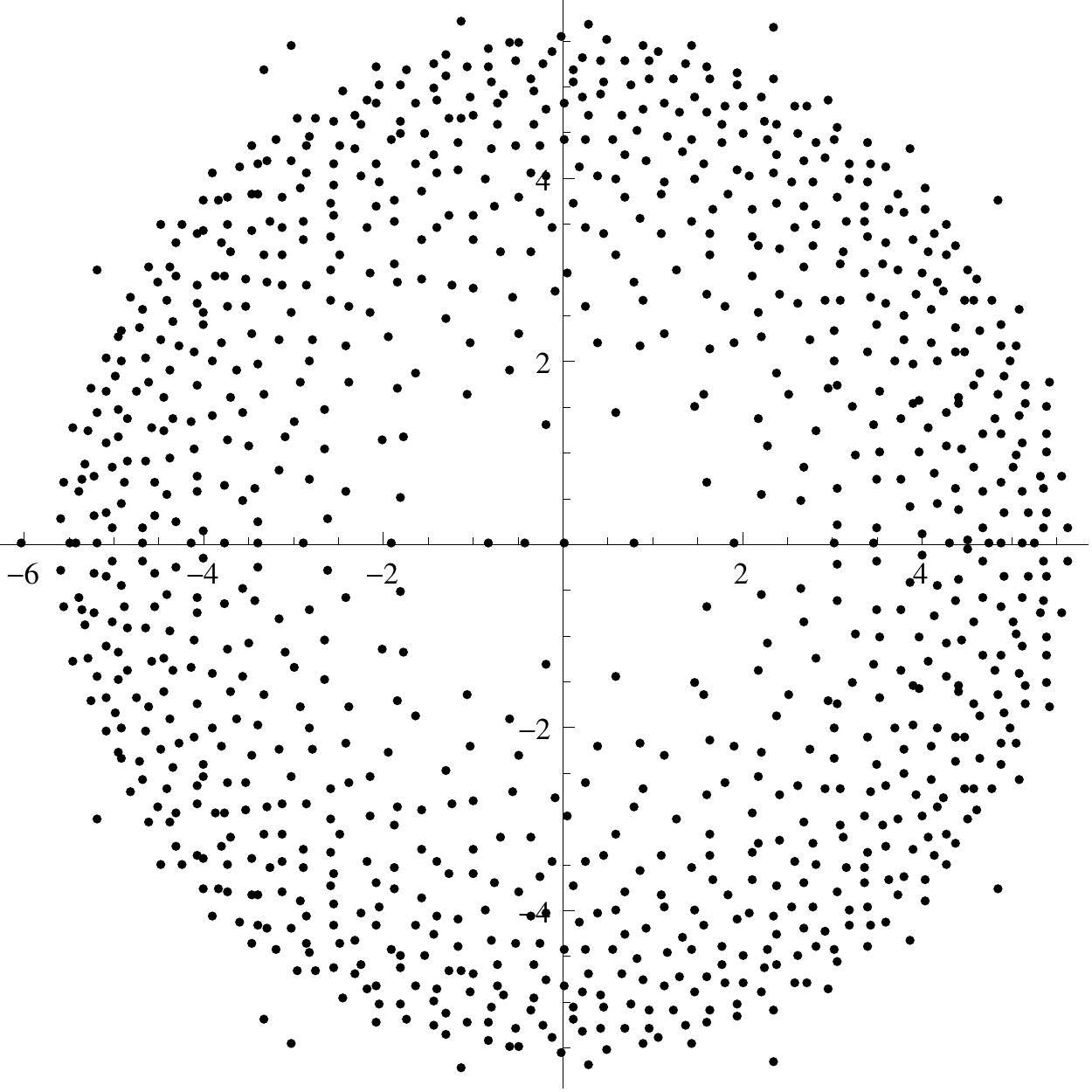}};
\end{tikzpicture}
\end{minipage} 
\begin{minipage}[r]{.45\textwidth}
\begin{tikzpicture}
\node at (0,0) {\includegraphics[width = \textwidth]{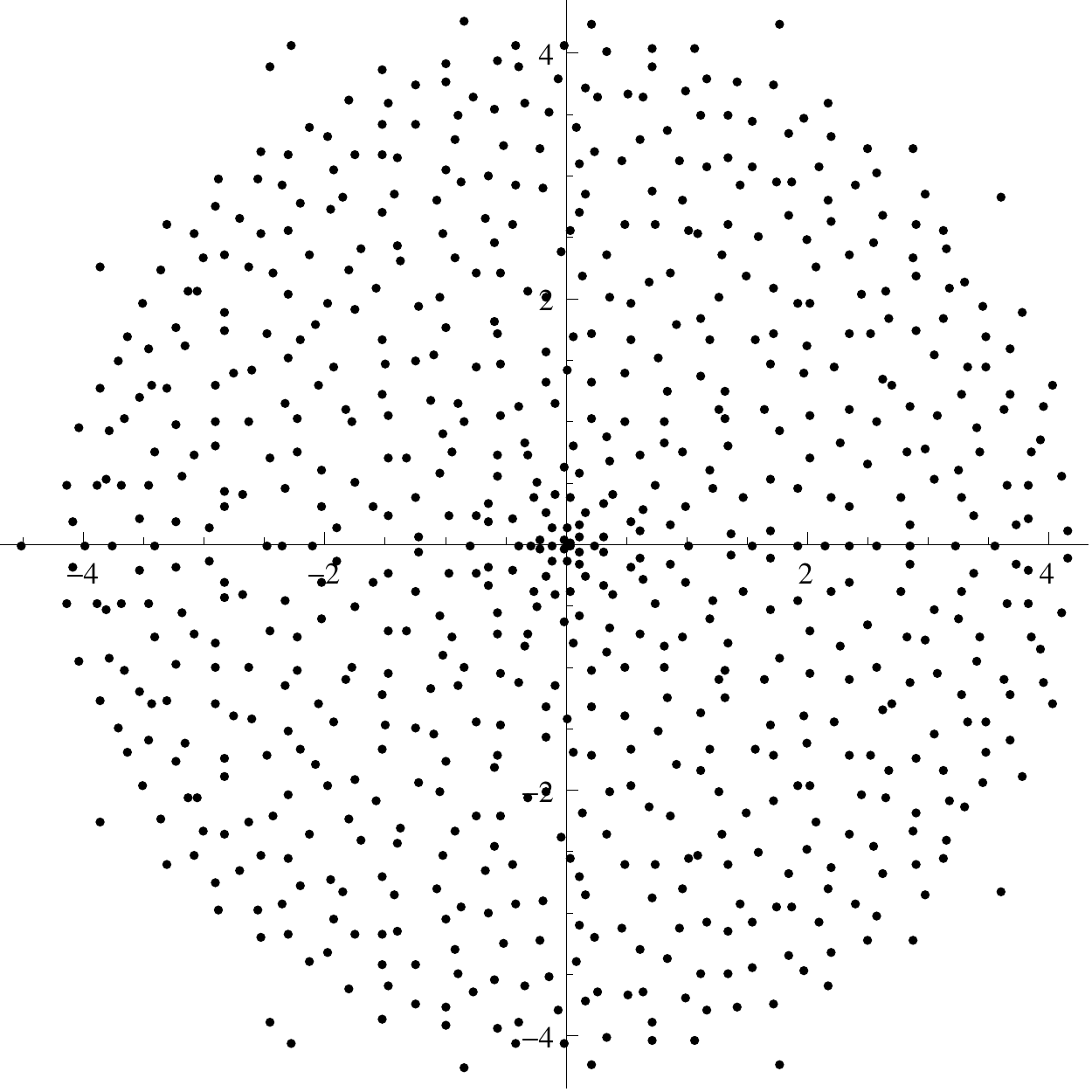}};
\end{tikzpicture}
\end{minipage} 
\caption{Roots of a random polynomial $p_{1000}(z) = \sum_{k=1}^{1000}{\gamma_k\cdot z^k (k!)^{-1/4}}$ (left) and the roots of the polynomial arising from differentiating 250 times (right).} 
\end{figure}

One natural question is now whether the dynamics that can be observed for random Taylor polynomials is in any way universal. We refer to Figure 2 for an example:  after differentiating
a random polynomial of a particular type 250 times, we observe a clustering of roots around the origin that is quite similar to that of random Taylor polynomials (or, in any case, seems
to have a similar scaling).
Our main result deals with linear stability for perturbations around the solution for random Taylor polynomials. More precisely, we will consider a small perturbation around the constant function $\psi(t,x) =  1 + w(t,x),$
where we assume that $\| w(t,x) \|_{L^{\infty}} \ll 1$. The linearized evolution can be derived from
\begin{align*}
 \frac{\partial w}{\partial t} &=  \frac{\partial}{\partial x}\left(  \left(\frac{1}{x} \int_0^x \psi(y) dy\right)^{-1} \psi(x)\right)\\
&=  \frac{\partial}{\partial x}\left(   \left(1 + \frac{1}{x} \int_0^x  w(y) dy\right)^{-1} (1+w(x))\right)\\
&=  \frac{\partial}{\partial x}\left(  \left(1 - \frac{1}{x} \int_0^x  w(y) dy\right) (1+w(x))\right) + \mbox{l.o.t.}\\
&=   \frac{\partial w}{\partial x} - \frac{\partial }{\partial x} \frac{1}{x} \int_0^x  w(y) dy  + \mbox{l.o.t.}
\end{align*}
and is thus given by
\begin{equation} \label{linear}
 \frac{\partial w}{\partial t} = \frac{\partial w}{\partial x} - \frac{\partial }{\partial x} \frac{1}{x} \int_0^x  w(y) dy. 
 \end{equation}
Our main result is that the constant solution $$\psi(t,x) = \chi_{0 \leq x \leq 1-t}$$
 (the dynamics of random Taylor
polynomials) has linear stability for small perturbations close to the origin.
\begin{thm}
If the perturbation $w(0,x)$ is compactly supported and has mean value 0, then the linearized evolution $(\ref{linear})$ is an $L^2-$contraction, i.e.
$$ \frac{\partial}{\partial t}\big|_{t=0} \int_0^{\infty}{w(t,x)^2 dx} \leq 0.$$
\end{thm}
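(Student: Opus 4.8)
The plan is to differentiate the $L^2$ norm in time, substitute the linearized equation $(\ref{linear})$, and integrate by parts until the expression is recognizable as (the deficit in) a weighted Hardy inequality. Throughout I would write $W(x) = \int_0^x w(y)\,dy$ for the primitive of the perturbation, so that $W' = w$. The two hypotheses then enter through the boundary data: compact support (which I would take to mean support in a compact subset of $(0,\infty)$) gives $W \equiv 0$ near the origin, while the mean-value-zero assumption gives $W(\infty) = \int_0^\infty w\,dy = 0$. Together these combine to the clean fact that $W$ itself is compactly supported in $(0,\infty)$, which is exactly what makes every integral converge and every boundary term produced below vanish identically.

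First I would compute, using $(\ref{linear})$,
\[
\frac{\partial}{\partial t}\Big|_{t=0}\int_0^\infty w^2\,dx = 2\int_0^\infty w\,w_t\,dx = 2\int_0^\infty w\,w_x\,dx - 2\int_0^\infty w\,\frac{\partial}{\partial x}\Big(\frac{W}{x}\Big)\,dx.
\]
The first term is a perfect derivative, $2\int_0^\infty w\,w_x\,dx = \int_0^\infty (w^2)_x\,dx = [w^2]_0^\infty = 0$, by the support assumption. For the second term I would expand $(W/x)' = W'/x - W/x^2$, use $W'=w$, and invoke the identity $\int_0^\infty \frac{W'W}{x^2}\,dx = \tfrac12\int_0^\infty \frac{(W^2)'}{x^2}\,dx = \int_0^\infty \frac{W^2}{x^3}\,dx$ (the boundary term $[W^2/x^2]_0^\infty$ vanishing). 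The whole expression then collapses to
\[
\frac{\partial}{\partial t}\Big|_{t=0}\int_0^\infty w^2\,dx = -2\int_0^\infty \frac{(W')^2}{x}\,dx + 2\int_0^\infty \frac{W^2}{x^3}\,dx.
\]

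At this point the claim $\frac{\partial}{\partial t}\int w^2 \le 0$ is \emph{exactly equivalent} to the weighted Hardy inequality
\[
\int_0^\infty \frac{W(x)^2}{x^3}\,dx \le \int_0^\infty \frac{W'(x)^2}{x}\,dx,
\]
which is the classical Hardy integral inequality in the weighted form $\int_0^\infty x^{-\beta}W^2\,dx \le \frac{4}{(\beta-1)^2}\int_0^\infty x^{2-\beta}(W')^2\,dx$ taken at the exponent $\beta = 3$, where the constant $\frac{4}{(\beta-1)^2}$ is precisely $1$. I would prove this directly by completing the square: for every constant $c$,
\[
0 \le \int_0^\infty \frac{1}{x}\Big(W' - c\,\frac{W}{x}\Big)^2 dx = \int_0^\infty \frac{(W')^2}{x}\,dx + (c^2 - 2c)\int_0^\infty \frac{W^2}{x^3}\,dx,
\]
again using $\int_0^\infty \frac{W'W}{x^2}\,dx = \int_0^\infty \frac{W^2}{x^3}\,dx$. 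Optimizing over $c$ — the choice $c=1$ makes $2c - c^2$ maximal and equal to $1$ — yields the desired inequality with sharp constant $1$, and hence the right-hand side of the collapsed identity above is nonpositive.

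The step needing real care, and the main obstacle, is the convergence of the integrals and the vanishing of the boundary terms at the \emph{singular} endpoint $x=0$: the weight $x^{-3}$ is badly non-integrable, so $W(x)$ must not be comparable to $x$ near the origin. This is exactly why I would insist that $w(0,\cdot)$ be supported away from $0$, so that $W \equiv 0$ near the origin and every endpoint contribution is literally zero; the general compactly supported case (with support possibly touching $0$, where both sides could diverge through the $1/x$ singularity of the equation) I would handle by a routine cutoff/approximation argument. The mean-zero hypothesis plays the complementary role at $x = \infty$, forcing $W(\infty)=0$, which is the boundary condition attached to the sharp constant in the weighted Hardy inequality at this exponent. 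With both endpoints dispatched, the inequality closes the argument and shows the linearized flow is an $L^2$-contraction.
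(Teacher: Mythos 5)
Your proof is correct, and the overall skeleton (differentiate the $L^2$ norm, kill the pure transport term by the support assumption, reduce the nonlocal term to a Hardy-type inequality) is the same as the paper's; where you genuinely diverge is in how the key inequality is obtained. The paper keeps everything at the level of $w$ itself: it needs the ``dual'' bound $\int_0^\infty \frac{w}{x^2}\bigl(\int_0^x w\bigr)\,dx \le \int_0^\infty \frac{w^2}{x}\,dx$, which it proves by Cauchy--Schwarz followed by a citation of the generalized Hardy inequality (Theorem 330 of Hardy--Littlewood--P\'olya with $(p,r)=(2,3)$). You instead pass to the primitive $W$, observe that the cross term is an \emph{exact} identity $\int \frac{W'W}{x^2}\,dx = \int \frac{W^2}{x^3}\,dx$ after one more integration by parts, and then prove the resulting weighted Hardy inequality $\int \frac{W^2}{x^3}\,dx \le \int \frac{(W')^2}{x}\,dx$ from scratch by completing the square. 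This buys two things: your argument is self-contained (no appeal to HLP), and it shows the paper's lemma is actually \emph{equivalent} to the weighted Hardy inequality rather than merely a Cauchy--Schwarz consequence of it, with no loss in the reduction. Two small points of divergence worth noting: the paper reads ``compactly supported'' as allowing the support to touch the origin, so its transport term contributes $-w(0,0)^2/2 \le 0$ rather than your exact $0$; and your insistence that the support stay away from $x=0$ (with a cutoff argument for the general case) is arguably the more careful treatment, since the paper's manipulations near the $1/x$ singularity are stated without justification when $w(0)\neq 0$. Your use of the mean-zero hypothesis (to make $W$ compactly supported and the boundary term at infinity vanish) is also more explicit than in the paper, where that hypothesis never visibly enters the proof.
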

One interesting aspect of this result is that it follows from a variation of a classical Hardy inequality stating if $f:(0,\infty) \rightarrow \mathbb{R}$ is measurable, then
$$ \int_{0}^{\infty}{\frac{f(x)}{x^2} \left( \int_0^x f(y) dy\right) dx} \leq \int_{0}^{\infty}{\frac{f(x)^2}{x} dx},$$
where the inequality is strict unless $f \equiv 0$. (We interpret the inequality in the usual sense: if the right-hand side is finite, then so is the left-hand side and it is smaller.)
Inequalities of this flavor have
been studied intensively in their own right \cite{bee, hardy0, hardy11, hardy1,hardyother1, hardy2, hardy3, hardy4}.

\subsection{Open Problems.} Since the purpose of this paper is to only pose the question and present an evolution equation from a mean field approximation, there are many open problems.

 \textit{1. Rigorous derivation.} 
 %Our derivation is not rigorous insofar as 
 In Section \ref{sec:derivation}, we assume the existence of an
underlying limiting dynamics and also assume radial dynamics to be dominant. 
A rigorous derivation requires a more detailed understanding of the local behavior in time. In the one-dimensional
case, this is an old problem: do the roots of polynomials (all of whose roots are on the real line) become more
regularly distributed under differentiation? In particular, do they obey a regular spacing at a local scale?
We refer to \cite{farmer, farmer2, feng, pem2} and references therein.\\
\textit{2. Qualitative behavior.} Certainly one of the most appealing aspects of having a partial differential equation describing the
limiting behavior is the ability to understand and analyze the asymptotic behavior of polynomials with a large
number of roots. What can be said about typical solutions? Is there a way to move between the partial differential equation
and families of polynomials in a way where they mutually inform each other?\\
 \textit{3. Breakdown of symmetry.} Something that we intrinsically assume in our derivation is that radial 
distributions stay radial under differentiation; however, slight deviations from radial symmetry stemming from the fact that we only have finitely many points might get amplified. Radial stability is a question
that might already be interesting and accessible from a mean field perspective.\\
\textit{4. Special cases.} Are there families of polynomials with radial roots whose roots exhibit particularly interesting dynamical
behavior under differentiation? We are only aware of the random Taylor polynomials: are there others?\\
\textit{5. Mixed Real/Complex Behavior.} One question one could ask is to understand the behavior of the real
roots in the presence of complex roots (which, naturally, also evolve). Is the corresponding equation simpler?\\
 \textit{6. Attractors.} Do generic distributions converge to the behavior of random Taylor polynomials in a neighborhood
 of the origin? More precisely, since the evolution of roots seems to be driven by those roots with smaller absolute
 value, it is conceivable that many cases develop a universal profile close to the origin after short time (perhaps behaving
 like random Taylor polynomials)?
Can the stability analysis of Random Taylor polynomials be
further refined? \\
We emphasize question \textit{7. the Non-Radial Case}. The `Lagrangian perspective' (following an individual fluid
particle, here: a single root) is easy enough to describe: if we are given a density of roots $\mu$,
then locally at a point $z$, the roots can be thought of as moving in a direction given by the
$$ \mbox{Cauchy-Stieltjes transform} \qquad - \left( \int_{\mathbb{C}}{ \frac{d \mu(y)}{z-y}} \right)^{-1}.$$
Our equation can be understood as a simplification of the Cauchy-Stieltjes transform in the radial setting. Can
this intuition be made precise? What can be said about the dynamics of this more general equation?
We conclude with an interesting
question that becomes only relevant in the non-radial setting.\\
\textit{8. Which root vanishes?} Suppose we are given a polynomial $p_n: \mathbb{C} \rightarrow \mathbb{C}$ of degree $n$.
It has $n$ roots, and its derivative $p_n'$ has $n-1$ roots. We like to think of the roots of the derivative $p_n'$ has the roots of
$p_n$ being moved a little bit in a direction prescribed by the Cauchy-Stieltjes transform. This line of reasoning has been
pursued and made rigorous by Williams and the first author \cite{or, or2} who showed various types of \textit{pairing} results
between the roots of $p_n$ and $p_n'$ in the stochastic setting.  However, one of the roots `vanishes' (or, put differently, will be unpaired). It has been
empirically observed that the root that vanishes seems to be close to a root of the Cauchy-Stieltjes transform. The converse is
fairly easy to establish: \textit{if} the Cauchy-Stieltjes transform is large in a point where we have an isolated root, then the
derivative has a root nearby (at scale roughly $\sim n^{-1}$). In our radial setting, the Cauchy-Stieltjes transform vanishes
at the origin which is consistent with numerics. However, a rigorous argument in that direction is missing.

\section{Derivation of the Equation} \label{sec:derivation}
\subsection{Assumptions.} Our derivation is conditional on several assumptions that we discuss here.  They appear to be quite
interesting problems in their own right.\\

\textit{1. The limiting dynamics exists.}  It is not a priori clear that the limiting dynamics exists. This problem naturally comes
in various flavors depending on the strength of the statement. Suppose we are given an initial radial density $2 \pi r u_0(r)$
and sample $n$ points independently at random from that distribution, is it true that the distribution of the $n \cdot t-$th
derivative of the associated polynomials has a distribution of roots that converges weakly to a limiting distribution as $n \rightarrow \infty$?
Different types of statements are conceivable: for example, one could wonder whether, for $n$ sufficiently large and $\varepsilon$
small, it is true that if a polynomial has roots $\varepsilon-$close to some radial distribution, whether the the $n \cdot t-$th derivative
will have the distribution of roots $\delta-$ close to another fixed distribution, where $\delta \rightarrow 0$ as $\varepsilon \rightarrow 0$
as $n \rightarrow \infty$. This would correspond to a more deterministic statement. \\

\textit{A Consequence.} A consequence of the limiting dynamics existing is that radial distributions remain radial. This simply follows
from the fact that, for polynomials, differentiation and rotation around the origin, commute. We can thus phrase a simple way of
disproving Assumption \textbf{1}: it would be false, for example, if it were the case that non-radial dynamics, slight inhomogeneities
in the angular structure, for example, would lead to unstable directions of the dynamics. \\

\textit{2. Roots remain separated.} If the dynamics does indeed exists, it follows that it remains radial by the argument outlined above.
It would then remain to understand how this radial structure evolves. We perform a corresponding computation under the assumption
that roots do not `coalesce'. When sampling $n$ roots from a smooth, compactly supported, radial distribution, we expect each root to be distance $\sim n^{-1/2}$
from its nearest neighbors (on average). A typical root is thus not too close to its neighbors. Our derivation of the radial movement
is based on decomposing the logarithmic potential into a near-field and a far-field: if the neighbors are all far away, the near-field does not
have sufficient impact and the far-field dominates. Phrased differently, the way roots move depends on the global structure and not on
the local structure. This assumption seems fairly reasonable, the `root-repulsion' phenomenon is fairly well understood and it is not
difficult to show that this assumption is indeed true after differentiating $k$ times where $k$ is fixed. We refer to \cite{sean, or, or2}.\\

We emphasize that both assumptions above are indeed satisfied for random Taylor polynomials which are correctly described by the
evolution equation we derive.

\subsection{Derivation of the Mean-Field Limit under the Assumptions.} 
In this section we will deduce the mean-field limit equation
$$  \frac{\partial \psi}{\partial t}  = \frac{\partial}{\partial x}\left(  \left(\frac{1}{x} \int_0^x \psi(y) dy\right)^{-1} \psi(x)\right)$$
under the assumptions that the limiting evolution exists and is continuous. Another assumption is that radial measures
remain radial under the evolution (this is clearly the case in the limit but not necessarily clear for large values of $n$, 
see \S 2.3). They key ingredient is the identity
$$ \frac{p_n'(z)}{p_n(z)} = \sum_{k=1}^{n}{\frac{1}{z-z_k}},$$
where $z_1, \dots, z_n$ are the roots of $p_n(z)$. We will now fix the root $z_{\ell}$ and are interested whether there
is a root of $p_n'$ nearby and whether that nearby root can be written in terms of the distribution of the roots of $p_n$. If a nice limiting
density exists, we can assume that the $n$ roots are spread out over area $\sim 1$, this means that the distance from 
a root and its nearby neighbors is typically at scale $\sim n^{-1/2}$. Rotational symmetry allows us to assume $z_{\ell} \in \mathbb{R}$.
Roots of $p_n'$ nearby satisfy the equation
$$ z_{\ell} -z = \left(  \sum_{k=1 \atop k \neq \ell}^{n}{\frac{1}{z-z_k}}\right)^{-1}.$$
It remains to estimate the size of the sum under the assumption that the roots are distributed according to a density $\psi(t,x)$.
To this end, we consider the following integral for two parameters $r,s > 0$
$$ \frac{1}{2\pi} \int_{0}^{2\pi} \frac{1}{r - s e^{it}} dt = \begin{cases} 0 \qquad &\mbox{if}~r < s\\
1/r \qquad &\mbox{if}~r > s \end{cases}.$$
If the number of roots at distance $x$ from the origin is given by $n \psi(t,x) dx$, then this integral (see Fig. 2) suggests that 
$$ z_{\ell} - z \sim  \left(  \sum_{k=1 \atop k \neq \ell}^{n}{\frac{1}{z_{\ell}-z_k}}\right)^{-1} \sim n^{-1} \left( \int_{0}^{z_{\ell}} \frac{\psi(t,x)}{z_{\ell}} dx\right)^{-1}.$$
This suggests that the root moves to the left by a factor only determined by this integral over the density of roots having a smaller norm.
\begin{center}
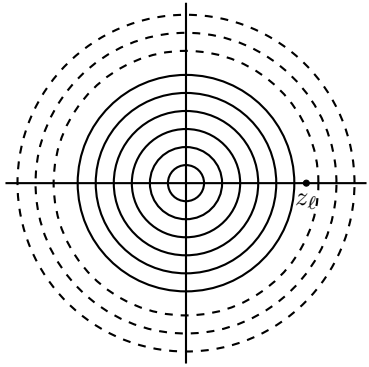
\begin{figure}[h!]
\begin{tikzpicture}[scale=0.8]
\draw [thick] (-3, 0) -- (3,0);
\draw [thick] (0,-3) -- (0,3);
\filldraw (2,0) circle (0.05cm);
\node at (2, -0.3) {$z_{\ell}$};
\draw [thick] (0,0) circle (0.3cm);
\draw [thick] (0,0) circle (0.6cm);
\draw [thick] (0,0) circle (0.9cm);
\draw [thick] (0,0) circle (1.2cm);
\draw [thick] (0,0) circle (1.5cm);
\draw [thick] (0,0) circle (1.8cm);
\draw [thick, dashed] (0,0) circle (2.2cm);
\draw [thick, dashed] (0,0) circle (2.5cm);
\draw [thick, dashed] (0,0) circle (2.8cm);
\end{tikzpicture}
\caption{Only the inner circles contribute to the integral.}
\end{figure}
\end{center}
Phrased differently, roots at distance $x$ from the origin (whose total density is given by $\psi(t,x)$) move to slightly smaller distance with a speed determined by 
$$ \mbox{the nonlocal vectorfield} \sim  - \left( \frac{1}{x} \int_{0}^{x} \psi(t,y) dy\right)^{-1}.$$
We recall that if we are on the real line and have a density $\phi$ subjected to a vector field $v$, then the evolution equation is a special case of a convection-diffusion equation and reads
$$ \frac{\partial}{\partial t} \phi = - \frac{\partial}{\partial x} \left( v \phi\right)$$
which results in the desired equation.

\section{Proof of Linear Stability}
We conclude the paper by establishing linear stability. We start with a dual version of the generalized Hardy inequality. Given the substantial previous works on the subject,
it is presumably stated somewhere in the literature, however, it is also rather easy to deduce from the generalized form of the Hardy inequality. We understand the inequality in the usual sense (i.e. if the right-hand side is finite, then so is the left-hand side which is then
also bounded by the right-hand side).

\begin{lemma}[Hardy-type inequality] Let $f:(0,\infty) \rightarrow \mathbb{R}$ be measurable. Then
\begin{equation}
\int_{0}^{\infty}{\frac{f(x)}{x^2} \left( \int_0^x f(y) dy\right) dx} \leq \int_{0}^{\infty}{\frac{f(x)^2}{x} dx},
\end{equation}
where the inequality is strict unless $f \equiv 0$. 
\end{lemma}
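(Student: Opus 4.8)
The plan is to reduce the inequality to a single application of the Cauchy--Schwarz inequality after one integration by parts, exploiting a self-improving structure. Write $F(x) = \int_0^x f(y)\,dy$, so that $F' = f$ and the integrand on the left is $fF/x^2 = \tfrac12 (F^2)'/x^2$. I would first dispose of the trivial case: if the right-hand side $R := \int_0^\infty f(x)^2 x^{-1}\,dx$ is infinite there is nothing to prove, so assume $R < \infty$. Set $A := \int_0^\infty fF\,x^{-2}\,dx$ (the left-hand side) and $B := \int_0^\infty F(x)^2 x^{-3}\,dx$.

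Next I would integrate by parts in $A = \tfrac12\int_0^\infty (F^2)'(x)\, x^{-2}\,dx$, moving the derivative off $F^2$ and onto $x^{-2}$. This produces the boundary term $\tfrac12\big[F(x)^2/x^2\big]_0^\infty$ together with $B$, since $(x^{-2})' = -2x^{-3}$. The crucial --- and really the only delicate --- point is that both boundary contributions vanish once $R<\infty$. At the origin this follows from the Cauchy--Schwarz estimate $F(x)^2 = \big(\int_0^x f\big)^2 \le \big(\int_0^x f(y)^2 y^{-1}\,dy\big)\big(\int_0^x y\,dy\big) \le \tfrac{x^2}{2}\int_0^x f(y)^2 y^{-1}\,dy$, whose last factor tends to $0$ as $x\to 0$ because the weighted integral converges; hence $F(x)^2/x^2 \to 0$. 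This is precisely where $R<\infty$ enters: were $f$ continuous with $f(0)\neq 0$, the weighted integral would diverge and $R$ would be infinite. At infinity the same estimate applied on a tail $[M,x]$ shows that $\limsup_{x\to\infty} F(x)^2/x^2$ is controlled by the tail mass $\int_M^\infty f(y)^2 y^{-1}\,dy$, which is arbitrarily small for large $M$; hence the limit is $0$. With both endpoints under control, integration by parts yields the identity $A = B$.

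With $A = B$ in hand the inequality becomes essentially automatic. Writing $A = \int_0^\infty \big(f\,x^{-1/2}\big)\big(F\,x^{-3/2}\big)\,dx$ and applying Cauchy--Schwarz gives $A \le R^{1/2}B^{1/2} = R^{1/2}A^{1/2}$, where I have used $B = A$. Since $A = B \ge 0$, this reads $A^2 \le RA$, and therefore $A \le R$, which is the claim. For strictness, note that equality in the Cauchy--Schwarz step would force $f(x) = \lambda F(x)/x$ pointwise for some constant $\lambda$, i.e. $xF'(x) = \lambda F(x)$, whose solutions are the pure powers $F(x) = c\,x^\lambda$ and hence $f(x)^2/x = c^2\lambda^2 x^{2\lambda-3}$; a direct check shows this is integrable at $0$ iff $\lambda > 1$ and at $\infty$ iff $\lambda < 1$, so the only admissible case is $c = 0$, that is $f\equiv 0$.

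I expect the boundary-term analysis in the integration by parts to be the main obstacle --- everything else is a two-line Cauchy--Schwarz argument with a self-improving twist --- and it is exactly there that the convention ``$R<\infty\Rightarrow$ the left-hand side is finite and smaller'' is used. As a sanity check and an alternative route, the substitution $x = e^t$ turns $R$ into $\|\phi\|_{L^2}^2$ for $\phi(t) = f(e^t)$ and turns $A$ into the quadratic form $\langle \phi, k*\phi\rangle$ with kernel $k(u) = e^{-u}\chi_{u\ge 0}$; since $k$ has Fourier symbol $(1+i\xi)^{-1}$ with real part $(1+\xi^2)^{-1} \le 1$, Plancherel gives $A = \int |\widehat\phi(\xi)|^2 (1+\xi^2)^{-1}\,d\xi \le \int |\widehat\phi(\xi)|^2\,d\xi = R$, with equality only when $\widehat\phi$ concentrates at $\xi=0$, i.e. $f\equiv 0$. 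This confirms the same constant and strictness from a transform-side viewpoint.
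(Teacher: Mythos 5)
Your proof is correct, but it takes a genuinely different (and more self-contained) route than the paper. The paper applies the same Cauchy--Schwarz splitting $fF/x^2=(f x^{-1/2})(Fx^{-3/2})$ that you use, but then simply \emph{cites} the generalized Hardy inequality $\int_0^\infty F^2x^{-3}\,dx\le\int_0^\infty f^2x^{-1}\,dx$ (Theorem 330 of Hardy--Littlewood--P\'olya with $(p,r)=(2,3)$) to finish. You instead prove the identity $A=B$ by integration by parts, which turns the Cauchy--Schwarz bound into the self-improving estimate $A\le R^{1/2}A^{1/2}$; as a byproduct your argument actually reproves the cited Hardy inequality ($B=A\le R$) from scratch, and your boundary-term analysis correctly isolates where the hypothesis $R<\infty$ enters. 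Two further points in your favor: the paper's proof never addresses the strictness claim in the lemma statement, whereas your equality analysis (proportionality forces $F=cx^\lambda$, which is never admissible unless $c=0$) supplies it, and your Plancherel computation with symbol $\operatorname{Re}(1+i\xi)^{-1}=(1+\xi^2)^{-1}\le 1$ gives an independent confirmation of both the sharp constant and strictness. One small order-of-quantifiers caveat: before writing $A=B$ you should know $B<\infty$; the clean way is to integrate by parts on $[\varepsilon,M]$, use Cauchy--Schwarz there to get $B_{\varepsilon,M}\le R^{1/2}B_{\varepsilon,M}^{1/2}+\tfrac12 F(\varepsilon)^2\varepsilon^{-2}$, deduce a uniform bound on $B_{\varepsilon,M}$, and only then pass to the limit --- this is exactly the truncation your boundary analysis already implicitly performs, so it is a matter of presentation rather than a gap.
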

\begin{proof} We start with an application of Cauchy-Schwarz 
\begin{align*}
\int_{0}^{\infty}{\frac{f(x)}{x^2} \left( \int_0^x f(y) dy\right) dx} &= \int_{0}^{\infty}{\frac{f(x)}{\sqrt{x}}  \frac{1}{x^{3/2}}\left( \int_0^x f(y) dy\right) dx} \\
&\leq \left(  \int_{0}^{\infty}{\frac{f(x)^2}{x} dx} \right)^{1/2} \\
&\cdot \left( \int_{0}^{\infty}  \frac{1}{x}\left(\frac{1}{x} \int_0^x f(y) dy\right)^2 dx\right)^{1/2}.
\end{align*}
It thus suffices to prove the inequality 
$$ \int_{0}^{\infty}  \frac{1}{x}\left(\frac{1}{x} \int_0^x f(y) dy\right)^2 dx \leq \int_{0}^{\infty}{\frac{f(x)^2}{x} dx}.$$
This inequality, however, is known as the generalized Hardy inequality (see \cite{bee}). Indeed, a more general result is given by
Theorem 330 in the book of Hardy-Littlewood-Polya \cite{hardy} implying for $p > 1$ and $r > 1$ that
$$ \int_{0}^{\infty} x^{-r} \left( \int_{0}^{x}{f(y) dy}\right)^p dx \leq \left( \frac{p}{r-1}\right)^p \int_{0}^{\infty}{ x^{-r} (x f(x))^p dx}.$$ 
Our case is merely $(p,r) = (2,3)$. 
\end{proof}

\begin{proof}[Proof of the Stability Statement.]
The linearized equation is given by
$$ \frac{\partial w}{\partial t} = \frac{\partial w}{\partial x} - \frac{\partial }{\partial x} \frac{1}{x} \int_0^x  w(y) dy. $$
We try to understand the evolution of
\begin{align*}
\frac{\partial}{\partial t} \frac{1}{2}\int_{0}^{\infty}{ w(t,x)^2 dx} &= \int_{0}^{\infty}{ w(t,x) \frac{\partial w(t,x)}{\partial x} -  w(t,x)\frac{\partial }{\partial x} \frac{1}{x} \int_0^x  w(t,y) dy dx}\\
&= \frac{1}{2} \int_{0}^{\infty}{ \left( \frac{\partial}{\partial x} w(t,x)^2 \right) dx} \\
&- \int_{0}^{\infty}{ w(t,x)\left(\frac{\partial }{\partial x} \frac{1}{x} \int_0^x  w(t,y) dy\right) dx}.
\end{align*}
However, since $w(0, \cdot)$ is compactly supported, the first term at time $t = 0$ reduces to
$$ \frac{1}{2} \int_{0}^{\infty}{ \left( \frac{\partial}{\partial x} w(0,x)^2 \right) dx} = -\frac{w(0,0)^2}{2} \leq 0.$$
We will disregard this quantity and focus on the second term.
The product rule and the fundamental theorem of calculus result in
$$ \frac{\partial }{\partial x} \frac{1}{x} \int_0^x  w(t,y) dy =-\frac{1}{x^2} \int_0^x  w(t,y) dy + \frac{w(t,x)}{x}.$$
It thus remains to understand the sign of the integral which, due to the Hardy inequality proven above, is given by
$$  \int_{0}^{\infty}{ \left( \frac{w(t,x)^2}{x} - \frac{w(t,x)}{x^2} \int_0^x  w(t,y) dy\right) dx} \geq 0.$$
\end{proof}

\end{document}